\newtheorem{thm}{Theorem}[section]
\newtheorem{lem}[]{Lemma}
 \newcommand{\thmref}[1]{Theorem~\ref{#1}}
 \newcommand{\lemref}[1]{Lemma~\ref{#1}}
\newcommand{\R}{{\mathbb R}}
\newcommand{\C}{{\mathbb C}}
\newcommand{\bee}{\begin{equation*}}
\newcommand{\eee}{\end{equation*}}
\newcommand{\be}{\begin{equation}}
\newcommand{\ee}{\end{equation}}
\newcommand{\pn}{\par\noindent}
\title{Dynamical 
Systems Method (DSM) for solving nonlinear operator 
equations in Banach spaces}
\author{A G Ramm\\
\small Department of Mathematics\\[-0.8ex]
\small Kansas State University, Manhattan, KS 66506-2602, USA\\[-0.8ex]
\small \texttt{ramm@math.ksu.edu}\\
}
\begin{document}
 \date{} \maketitle \begin{abstract} Let $F(u)=h$ be an operator equation
in a Banach space $X$, $\|F'(u)-F'(v)\|\leq \omega(\|u-v\|)$, where
$\omega\in C([0,\infty))$, $\omega(0)=0$, $\omega(r)>0$ if $r>0$,
$\omega(r)$ is strictly growing on $[0,\infty)$. Denote $A(u):=F'(u)$,
where $F'(u)$ is the Fr\'{e}chet derivative of $F$, and $A_a:=A+aI.$
Assume that (*) $\|A^{-1}_a(u)\|\leq \frac{c_1}{|a|^b}$, $|a|>0$, $b>0$,
$a\in L$. Here $a$ may be a complex number, and $L$ is a smooth path on
the complex $a$-plane, joining the origin and some point on the complex
$a-$plane, $0<|a|<\epsilon_0$,  
where
$\epsilon_0>0$ is a small fixed number,
 such that for any $a\in L$ estimate (*) holds. 
It is proved that the DSM (Dynamical Systems Method) \bee
\dot{u}(t)=-A^{-1}_{a(t)}(u(t))[F(u(t))+a(t)u(t)-f],\quad u(0)=u_0,\
\dot{u}=\frac{d u}{dt}, \eee converges to $y$ as $t\to +\infty$, where
$a(t)\in L,$ $F(y)=f$, $r(t):=|a(t)|$, and $r(t)=c_4(t+c_2)^{-c_3}$, where
$c_j>0$ are some suitably chosen constants, $j=2,3,4.$ Existence of a
solution $y$ to the equation $F(u)=f$ is assumed. It is also assumed that
the equation $F(w_a)+aw_a-f=0$ is uniquely solvable for any $f\in X$,
$a\in L$, and $\lim_{|a|\to 0,a\in L}\|w_a-y\|=0.$
\end{abstract} 
\pn{\\ MSC 2000, 47J05, 47J06, 47J35 \\ {\em Key words:}
Nonlinear operator equations; DSM (Dynamical Systems Method); Banach
spaces }

\section{Introduction} Consider an operator equation \be\label{e1} F(u)=f,
\ee where $F$ is an operator in a Banach space $X$. By $X^*$ denote the
dual space of bounded linear functionals on $X$.

Assume that $F$ is continuously Fr\'{e}chet differentiable, $F'(u):=A(u)$,
and 
\be\label{e2} \|A(u)-A(v)\|\leq \omega(\|u-v\|),\quad
\omega(r)=c_0r^\kappa,\quad \kappa\in (0,1], \ee $c_0>0$ is a constant.
The function $\omega(r)$, in general, is a continuous strictly growing
function, $\omega(0)=0$.

Assume that \be\label{e3} \|A_a^{-1}(u)\|\leq \frac{c_1}{|a|^b};\ |a|>0,\
A_a:=A+aI,\ c_1=const>0,\ b>0. \ee Here $a$ may be a complex number,
$|a|>0$, and there exists a smooth path $L$ on the
complex plane $\C$, such that for any $a\in L$, $|a|<\epsilon_0$, where
$\epsilon_0>0$ is a small fixed number independent of $u$, estimate 
\eqref{e3} holds, and $L$ 
joins the origin and some point $a_0$, $0<|a_0|<\epsilon_0$. 
Assumption \eqref{e3} holds if there is a smooth path $L$ on a complex 
$a$-plane, consisting of regular points of the operator $A(u)$, such
that the norm of the resolvent $A_a^{-1}(u)$ grows, as $a\to 0$,
not faster than a power $|a|^{-b}$. Thus, assumption \eqref{e3}
is a weak assumption. For example, assumption \eqref{e3} is satisfied for
the class of linear operators $A$, satisfying the spectral assumption, 
introduced in \cite{R499}, Chapter 8. This spectral assumption 
says, that the set $\{a: |\arg a-\pi|\leq \phi_0, \,\, 0<|a|<\epsilon_0\}$
consists of the regular points of the operator $A$. This assumption 
implies the estimate $||A_a^{-1}||\leq \frac{c_1}{a},\,\, 0<a<\epsilon_0,$
similar to estimate \eqref{e3}. 
 
Assume additionally that
the equation 
\be\label{e4} F(w_a)+aw_a-f=0,\quad a\in L, \ee 
is uniquely
solvable for any $f\in X$, and \be\label{e5} \lim_{a\to 0,a\in
L}\|w_a-y\|=0,\quad F(y)=f. \ee 
{\it All the above assumptions are standing and
are not repeated in the formulation of \thmref{thm1}, which is our main
result.} 

These assumptions are satisfied, e.g., if $F$ is a monotone
operator in a Hilbert space $H$ and $L$ is a segment $[0,\epsilon_0]$,
in which case $c_1=1$ and $b=1$ (see \cite{R499}).

Every equation \eqref{e1} with a linear, closed, densely defined in 
a Hilbert space $H$
operator $F=A$ can be reduced to an equation with a monotone operator
$A^*A$, where $A^*$ is the adjoint to $A$. The operator $T:=A^*A$ is
selfadjoint and densely defined in $H$.  If $f\in D(A^*)$, where $D(A^*)$ 
is the domain of $A^*$, then the equation $Au=f$ is equivalent to 
$Tu=A^*f$, provided that $Au=f$ has a solution, i.e., $f\in R(A)$, where 
$R(A)$ is the range of $A$.
Recall that $D(A^*)$ is dense in $H$ if $A$ is closed and densely
defined in $H$. 
If $f\in R(A)$ but $f\not\in D(A^*)$, then equation $Tu=A^*f$ still makes
sense and its normal solution $y$, i.e., the solution with minimal
norm, can be defined as 
\be\label{e6} y=\lim_{a\to 0}T_a^{-1}A^*f.
\ee 
One proves that $Ay=f$, and $ y\perp N(A),$
where $N(A)$ is the null-space of $A$. These
results are proved in \cite{R500}, \cite{R504}, \cite{R522}.

Our aim is to prove convergence of the DSM (Dynamical Systems Method) for
solving equation \eqref{e1}, which is of the form: 
\be\label{e7}
\dot{u}=-A_{a(t)}^{-1}[F(u(t))+a(t)u(t)-f],\quad u(0)=u_0, \ee where
$u_0\in X$ is an initial element, $a(t)\in C^1[0,\infty)$, $a(t)\in L$.
Our main result is formulated in \thmref{thm1}, in Section 2. 

The DSM for solving operator equations has been developed in
the monograph \cite{R499} and in a series of papers 
\cite{R500}-\cite{R545}. 
It was used as an efficient computational tool in
\cite{R539}-\cite{R574}.
One of the earliest papers on the continuous analog of
Newton's method for solving well-posed nonlinear operator equations was 
\cite{G}.

The novel points in the current paper include the larger class of 
the operator 
equations than earlier considered, and the weakened assumptions on the 
smoothness of the nonlinear operator $F$. While in \cite{R499}
it was often assumed that $F^{\prime \prime}(u)$ is locally bounded, in 
the current paper
a weaker assumption \eqref{e2} is used.    

Our proof of \thmref{thm1}
uses the following result from \cite{R558}. \begin{lem}\label{lem1} Assume
that $g(t)\geq 0$ is continuously differentiable on any interval $[0,T)$
on which it is defined and satisfies the following inequality:
\be\label{e8} \dot{g}(t)\leq -\gamma(t)g(t)+\alpha(t)g^p(t)+\beta(t),\quad
t\in[0,T), \ee where $p>1$ is a constant, $\alpha(t)>0$, $\gamma(t)$ and
$\beta(t)$ are three continuous on $[0,\infty)$ functions. Suppose that 
there
exists a $\mu(t)>0$, $\mu(t)\in C^1[0,\infty)$, such that \be\label{e9}
\alpha(t)\mu^{-p}(t)+\beta(t)\leq
\mu^{-1}(t)[\gamma(t)-\dot{\mu}(t)\mu^{-1}(t)],\qquad t\geq 0, \ee and
\be\label{e10} \mu(0)g(0)<1. \ee Then $T=\infty$, i.e., $g$ exists on
$[0,\infty)$, and 
\be\label{e11} 0\leq g(t)\leq \mu^{-1}(t),\quad t\geq 0.
\ee

\end{lem}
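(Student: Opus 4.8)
\emph{Proof proposal.} The plan is to recognize $\mu^{-1}$ as a supersolution of the differential inequality \eqref{e8} and then run a comparison argument. First I would rewrite hypothesis \eqref{e9}: putting $\eta(t):=\mu^{-1}(t)$, which is positive and $C^1$ on $[0,\infty)$, one has $\dot\eta=-\mu^{-2}\dot\mu$ and $\mu^{-p}=\eta^{p}$, so the right-hand side $\mu^{-1}[\gamma-\dot\mu\mu^{-1}]$ of \eqref{e9} equals $\gamma\eta+\dot\eta$, and \eqref{e9} becomes exactly
\bee
\dot\eta(t)\ \ge\ -\gamma(t)\eta(t)+\alpha(t)\eta^{p}(t)+\beta(t),\qquad t\ge 0 .
\eee
Meanwhile \eqref{e8} says $g$ satisfies the reverse inequality, and \eqref{e10} reads $g(0)<\eta(0)$. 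Thus the lemma reduces to a comparison principle: a $C^1$ subsolution of $\dot x=-\gamma x+\alpha x^{p}+\beta$ that starts strictly below a $C^1$ supersolution stays below it, and the resulting a priori bound then forbids finite-time blow-up.

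Next I would carry out the comparison. Let $t_1:=\inf\{t\in[0,T):\,g(t)\ge\eta(t)\}$. If this set is empty, then $g<\eta=\mu^{-1}$ on $[0,T)$ and \eqref{e11} holds (recall $g\ge0$), so suppose it is nonempty; by continuity and \eqref{e10} one gets $0<t_1<T$, with $g<\eta$ on $[0,t_1)$ and $g(t_1)=\eta(t_1)$. Put $\psi:=g-\eta$, so $\psi<0$ on $[0,t_1)$ and $\psi(t_1)=0$. Subtracting the two differential inequalities,
\bee
\dot\psi\ \le\ -\gamma\psi+\alpha\,(g^{p}-\eta^{p})\qquad\text{on }[0,t_1).
\eee
On $[0,t_1)$ both $g$ and $\eta$ lie in the bounded set $[0,\max_{[0,t_1]}\eta]$, so by the mean value theorem $g^{p}-\eta^{p}=c(t)\psi$ with $0\le c(t)\le p\big(\max_{[0,t_1]}\eta\big)^{p-1}$; hence $\dot\psi\le K(t)\psi$ on $[0,t_1)$ with $K:=\alpha c-\gamma$ continuous and bounded on $[0,t_1)$. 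With the integrating factor $E(t):=\exp\!\big(-\int_0^{t}K(s)\,ds\big)>0$ one has $\tfrac{d}{dt}\big(\psi E\big)=(\dot\psi-K\psi)E\le0$, so $\psi(t)E(t)\le\psi(0)E(0)=g(0)-\eta(0)<0$ for $t\in[0,t_1)$. Letting $t\to t_1^{-}$ and using that $E$ extends to a positive value at $t_1$ (since $K$ is bounded), we get $\psi(t_1)<0$, contradicting $\psi(t_1)=0$. Hence the bad set is empty and $0\le g(t)\le\mu^{-1}(t)$ on $[0,T)$, which is \eqref{e11}.

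Finally, $T=\infty$ follows from \eqref{e11}: if $T<\infty$, then $0\le g(t)\le\max_{[0,T]}\mu^{-1}=:M<\infty$ on $[0,T)$, so by \eqref{e8} $\dot g$ is bounded on $[0,T)$, hence $g$ is Lipschitz there and extends continuously to $t=T$; the usual local continuation of the underlying evolution problem then contradicts the maximality of $[0,T)$, so $T=\infty$.

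I expect the only genuine obstacle to be the degenerate tangency at $t_1$: the naive first-crossing argument — differentiating $\mu g$ at a first time where it equals $1$ — yields only $\tfrac{d}{dt}(\mu g)\le0$ there, which is compatible with the forced $\tfrac{d}{dt}(\mu g)\ge0$ and gives no contradiction. The Gronwall-type estimate $\dot\psi\le K\psi$ together with the positive integrating factor $E$ is precisely what rules out $\psi$ returning to $0$ from below.
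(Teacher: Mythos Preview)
Your argument is correct. The paper does not actually prove \lemref{lem1}; it is quoted verbatim from \cite{R558} and used as a black box, so there is no in-paper proof to compare against line by line. That said, your approach is the right one and is essentially the standard comparison-principle proof: recasting \eqref{e9} as the statement that $\eta:=\mu^{-1}$ is a supersolution of the scalar ODE $\dot x=-\gamma x+\alpha x^{p}+\beta$, and then showing that a subsolution starting strictly below a supersolution cannot touch it. Your use of the integrating factor on $\psi=g-\eta$ is exactly what is needed, and your closing remark is on point: the cruder argument that differentiates $w:=\mu g$ at a first time where $w=1$ gives only $\dot w\le 0$ there (from \eqref{e9}) against $\dot w\ge 0$ (from first crossing), which yields $\dot w=0$ but no contradiction; the strict inequality \eqref{e10} has to be propagated, and your Gronwall step does precisely that.

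Two small remarks. First, to justify that $K=\alpha c-\gamma$ is continuous on $[0,t_1)$ it is cleanest to define $c(t)$ explicitly as $c(t)=\dfrac{g(t)^{p}-\eta(t)^{p}}{g(t)-\eta(t)}$ (well-defined since $g<\eta$ there), rather than appealing to an unspecified mean-value point $\xi(t)$; the quotient is manifestly continuous and obeys the same bound $0\le c\le p\bigl(\max_{[0,t_1]}\eta\bigr)^{p-1}$. Second, the ``$T=\infty$'' part of the lemma is, as you implicitly note, really a statement about the underlying evolution \eqref{e7}: the a~priori bound $g\le\mu^{-1}$ keeps $u(t)$ in a bounded set, and then the local existence argument (here \lemref{lem2}) gives continuation. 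The lemma as stated is slightly informal on this point, and your treatment of it is appropriate.
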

This lemma generalizes a similar result for $p=2$ proved in \cite{R499}.

In Section 2 a method is given for a proof of the following conclusions:
there exists a unique solution $u(t)$ to problem \eqref{e7} for all $t\geq
0$, there exists $u(\infty):=\lim_{t\to \infty}u(t)$, and $F(u(\infty))=f$
: \be\label{e12} \exists ! u(t)\quad \forall t\geq 0; \ \exists
u(\infty);\quad F(u(\infty))=f. \ee

The assumptions on $u_0$ and $a(t)$ under which \eqref{e12} holds for the 
solution to \eqref{e7} are formulated in \thmref{thm1} in Section 2.
Theorem \thmref{thm1} in  Section 2 is our main result. Roughly speaking,
this result says that conclusions \eqref{e12} hold for the solution
to problem \eqref{e7}, provided that $a(t)$ is suitably chosen.

\section{Proofs} Let $|a(t)|:=r(t)>0$. If $a(t)=a_1(t)+a_2(t)$, where
$a_1(t)=\text{Re} a(t),\quad a_2(t)=\text{Im} a(t)$, then \be\label{e13}
|\dot{r}(t)|\leq |\dot{a}(t)|. \ee Indeed, \be\label{e14}
|\dot{r}(t)|=\frac{|a_1\dot{a}_1+a_2\dot{a}_2|}{r(t)}\leq
\frac{r(t)|\dot{a}(t)|}{r(t)}, \ee and \eqref{e14} implies \eqref{e13}.

Let $h\in X^*$ be arbitrary with $\|h\|=1$, and \be\label{e15}
g(t):=(z(t),h);\quad z(t):=u(t)-w_a(t), \ee where $u(t)$ solves \eqref{e7}
and $w_0(t)$ solves \eqref{e4} with $a=a(t)$. By the assumption, $w_a(t)$
exists for every $t\geq 0$. The local existence of $u(t)$, the solution to
\eqref{e7}, is the conclusion of \lemref{lem2}. 
\begin{lem}\label{lem2} If
\eqref{e4}-\eqref{e5} hold, then $u(t)$, the solution to \eqref{e7},
exists locally. \end{lem}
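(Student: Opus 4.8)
The plan is to read \eqref{e7} as an abstract ODE $\dot u=\Phi(t,u)$ in the Banach space $X$, with
\[
\Phi(t,u):=-A_{a(t)}^{-1}(u)\bigl[F(u)+a(t)u-f\bigr],
\]
and to obtain a local solution from the standard existence (and uniqueness) theorem for ODEs in a Banach space. For this it is enough to show that, on a set of the form $[0,\delta]\times \overline{B}(u_0,R)$, the map $\Phi$ is bounded, jointly continuous in $(t,u)$, and Lipschitz (or at least of a suitable modulus of continuity) in $u$ uniformly in $t$.

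First I would fix the parameters. Since $a(\cdot)\in C^1[0,\infty)$ takes values in $L$ and $r(t)=|a(t)|=c_4(t+c_2)^{-c_3}$, on every interval $[0,T]$ one has $r(T)\le r(t)\le r(0)$, and with $c_2,c_3,c_4$ chosen so that $r(0)<\epsilon_0$, assumption \eqref{e3} is available for $a=a(t)$, $t\in[0,T]$, with the \emph{uniform} bound $\|A_{a(t)}^{-1}(u)\|\le c_1 r(t)^{-b}\le c_1 r(T)^{-b}$ valid for \emph{every} $u\in X$; in particular $A_{a(t)}^{-1}(u)$ is defined on the whole ball. Next I would record the two estimates that feed into $\Phi$. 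For the nonlinear term: $F\in C^1$ and, by \eqref{e2}, $\|F'(u)\|\le \|F'(u_0)\|+\omega(R)$ on $\overline{B}(u_0,R)$, so $F$ is Lipschitz there, whence $u\mapsto F(u)+a(t)u-f$ is Lipschitz on $\overline B(u_0,R)$ with a constant uniform in $t\in[0,T]$, is bounded there, and is continuous in $t$. For the inverse operator, the identity
\[
A_{a}^{-1}(u)-A_{a}^{-1}(v)=A_{a}^{-1}(u)\bigl[A(v)-A(u)\bigr]A_{a}^{-1}(v)
\]
combined with \eqref{e2}, \eqref{e3} gives
\[
\bigl\|A_{a(t)}^{-1}(u)-A_{a(t)}^{-1}(v)\bigr\|\le \frac{c_1^2}{r(t)^{2b}}\,\omega(\|u-v\|),\qquad u,v\in X,
\]
and the same identity, now in the parameter $a$, shows $t\mapsto A_{a(t)}^{-1}(u)$ is continuous (using $a\in C^1$).

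Combining the two, $\Phi$ is a product of maps that on $[0,T]\times \overline B(u_0,R)$ are bounded, continuous in $t$, and, in $u$, respectively of modulus of continuity $O(\omega(\cdot))$ and Lipschitz, uniformly in $t$; the triangle inequality
\[
\|\Phi(t,u)-\Phi(t,v)\|\le \bigl\|A_{a(t)}^{-1}(u)-A_{a(t)}^{-1}(v)\bigr\|\,\|F(u)+a(t)u-f\|+\|A_{a(t)}^{-1}(v)\|\,\|F(u)-F(v)+a(t)(u-v)\|
\]
then shows $\Phi$ is bounded, jointly continuous, and of modulus of continuity $\le C(\omega(r)+r)$ in $u$ on $[0,T]\times\overline B(u_0,R)$. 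When $\omega(r)=c_0r$ this is a genuine Lipschitz bound, and the classical contraction-mapping argument applied to $u(t)=u_0+\int_0^t\Phi(s,u(s))\,ds$ yields a unique solution on a short interval $[0,\delta)$; for a general growing $\omega$ one invokes instead the local existence theorem for ODEs whose right side has a prescribed modulus of continuity in the state variable.

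I expect the delicate points to be the bookkeeping ones: that \eqref{e3} holds with $c_1,\epsilon_0$ independent of $u$, so that $A_{a(t)}^{-1}(u)$ is defined and \emph{uniformly} bounded on the whole ball (not merely near $u_0$), and that $|a(t)|$ stays bounded away from $0$ on the compact interval on which the solution is built — both immediate here, the latter from the explicit $r(t)=c_4(t+c_2)^{-c_3}>0$. Note that \eqref{e4}--\eqref{e5} are not really needed for this local statement beyond guaranteeing that the comparison element $w_{a(t)}$ (used later in $g(t)=(u(t)-w_{a(t)},h)$) is well defined; the local existence of $u(t)$ rests on \eqref{e2} and \eqref{e3} alone.
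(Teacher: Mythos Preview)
Your direct approach—verifying that $\Phi(t,u)=-A_{a(t)}^{-1}(u)[F(u)+a(t)u-f]$ has a suitable modulus of continuity in $u$ and invoking an ODE existence theorem—is natural and, for $\kappa=1$, correct. It is, however, genuinely different from the paper's, and the difference matters in the regime $\kappa\in(0,1)$ that the paper highlights as its novelty. The paper does not work with $\Phi$ directly. It introduces $\psi(t):=F(u(t))+a(t)u(t)-f$, observes that $\psi\mapsto u=:G(\psi)$ is $C^1$ by the inverse function theorem (since $F\in C^1$ and $A_{a(t)}(u)$ is boundedly invertible), and rewrites \eqref{e7} as the equivalent problem $\dot\psi=-\psi+\dot a(t)\,G(\psi)$, $\psi(0)=F(u_0)+a(0)u_0-f$. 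The right-hand side here is locally Lipschitz in $\psi$ for \emph{every} $\kappa\in(0,1]$, because the inverse function theorem requires only $F\in C^1$, not a modulus on $F'$; Picard--Lindel\"of then applies without further ado.

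Your $\Phi$, by contrast, carries the factor $A_{a(t)}^{-1}(u)$, whose modulus of continuity in $u$ is exactly $\omega$ via the resolvent identity you wrote. For $\kappa<1$ this makes $\Phi$ only H\"older in $u$, and your fallback—an unspecified ``local existence theorem for ODEs whose right side has a prescribed modulus of continuity''—is not a standard result in a general Banach space: Peano-type existence is known to fail in infinite dimensions, and $\omega(r)=c_0r^\kappa$ with $\kappa<1$ does not satisfy the Osgood condition $\int_0 dr/\omega(r)=\infty$ that would rescue a Picard-type iteration. So for $\kappa<1$ your argument has a genuine gap, and the paper's change of variable to $\psi$ is precisely the device that closes it. (Your side remark that \eqref{e4}--\eqref{e5} are not strictly needed for the local statement is correct: the paper invokes \eqref{e4} to obtain global solvability of $F(u)+au-f=\psi$, but the local inverse furnished by the inverse function theorem would already suffice for local existence of the ODE.)
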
 \begin{proof} Differentiate equation \eqref{e5}
with respect to $t$. The result is \be\label{e16}
A_{a(t)}(w_a(t))\dot{w}_a(t)=-\dot{a}(t)w_a(t), \ee or \be\label{e17}
\dot{w}_a(t)=-\dot{a}(t)A^{-1}_{a(t)}(w_a(t))w_a(t). \ee Denote
\be\label{e18} \psi(t):=F(u(t))+a(t)u(t)-f. \ee 
For any $\psi\in H$
equation \eqref{e18} is uniquely solvable for $u(t)$ by 
the inverse function theorem, because, by our assumption \eqref{e3},
the Fr\'{e}chet derivative $F'(u(t))+a(t)I$ is boundedly invertible,
and \eqref{e2} implies that the solution $u(t)$ to \eqref{e18} is
continuously differentiable with respect to $t$ if $\psi(t)$ is.
One may solve \eqref{e18} for $u$ and write $u=G(\psi)$, where
the map $G$ is continuously Fr\'{e}chet differentiable because $F$ is.

Differentiate \eqref{e18} and get \be\label{e19}
\dot{\psi}(t)=A_{a(t)}(u(t))\dot{u}(t)+\dot{a}(t)u. \ee If one wants the
solution to \eqref{e18} to be a solution to \eqref{e7}, then one has to
require that \be\label{e20} A_{a(t)}(u(t))\dot{u}=-\psi(t). \ee If
\eqref{e20} holds, then \eqref{e19} can be written as \be\label{e21}
\dot{\psi}(t)=-\psi+\dot{a}(t)G(\psi),\quad G(\psi):=u(t), \ee where
$G(\psi)$ is continuously Fr\'{e}chet differentiable. Thus, equation
\eqref{e21} is equivalent to \eqref{e7} at all $t\geq 0$ if \be\label{e22}
\psi(0)=F(u_0)+a(0)u_0-f. \ee Indeed, if $u$ solves \eqref{e7} then
$\psi$, defined in \eqref{e18}, solves \eqref{e21}-\eqref{e22}.
Conversely, if $\psi$ solves \eqref{e21}-\eqref{e22}, then $u(t)$, defined
as the unique solution to \eqref{e18}, solves \eqref{e7}. Since the
right-hand side of \eqref{e21} is Fr\'{e}chet differentiable, it satisfies
a local Lipschitz condition. Thus, problem \eqref{e21}-\eqref{e22} is
locally, solvable. Therefore, problem \eqref{e7} is locally solvable.\\
\lemref{lem2} is proved. \end{proof} To prove that the solution $u(t)$ to
\eqref{e7} exists globally, it is sufficient to prove the following
estimate \be\label{e23} \sup_{t\geq 0}\|u(t)\|<\infty. \ee
\begin{lem}\label{lem3} 
Estimate \eqref{e23} holds. \end{lem}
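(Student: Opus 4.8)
The plan is to reduce the global existence to an a priori bound via Lemma~\ref{lem1}, applied to a suitable scalar quantity built from $u(t)$. The natural choice is $g(t):=\|z(t)\|$ with $z(t)=u(t)-w_{a(t)}$, since the triangle inequality and assumption~\eqref{e5} (which gives $\sup_{t}\|w_{a(t)}\|<\infty$ once $a(t)\to 0$ along $L$ and $w_a\to y$) reduce \eqref{e23} to a bound on $g(t)$. To use the functional $g(t)=(z(t),h)$ introduced in \eqref{e15} I would first derive a differential inequality for it. Subtracting \eqref{e17} from \eqref{e7} and inserting $\psi(t)=F(u(t))+a(t)u(t)-f$ from \eqref{e18}, one writes $\dot z = -A^{-1}_{a(t)}(u)[F(u)+a u-f] + \dot a A^{-1}_{a(t)}(w_a)w_a$. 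Then I would add and subtract the term $F(w_a)+a w_a - f=0$ to express $F(u)+au-f = A_{a(t)}(u)z + K(z)$, where, by \eqref{e2} (i.e. $\|F'(u)-F'(v)\|\le\omega(\|u-v\|)=c_0\|u-v\|^\kappa$), the remainder satisfies $\|K(z)\|\le \int_0^1\|A(w_a+sz)-A(u)\|\,ds\,\|z\|\le \tfrac{c_0}{1+\kappa}\|z\|^{1+\kappa}$. Consequently $\dot z = -z - A^{-1}_{a(t)}(u)K(z) + \dot a A^{-1}_{a(t)}(w_a)w_a$.

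Pairing with $h$, $\|h\|=1$, and taking the supremum over such $h$ (so that $\sup_h(z,h)=\|z\|$, and $\tfrac{d}{dt}\|z\|\le (\dot z,h)$ for the maximizing $h$ in the usual one-sided-derivative sense), I get
\be\label{e:ineq}
\dot g(t)\le -g(t)+\frac{c_1}{r^{b}(t)}\cdot\frac{c_0}{1+\kappa}\,g^{1+\kappa}(t)+\frac{c_1|\dot a(t)|}{r^{b}(t)}\,\|w_{a(t)}\|,
\ee
using \eqref{e3} for both inverse operators and $|a(t)|=r(t)$. This is exactly the form \eqref{e8} of Lemma~\ref{lem1} with $p=1+\kappa>1$, $\gamma(t)\equiv 1$, $\alpha(t)=c_1c_0(1+\kappa)^{-1}r^{-b}(t)$, and $\beta(t)=c_1|\dot a(t)|r^{-b}(t)\sup_s\|w_{a(s)}\|$; note $|\dot a|$ is controlled through \eqref{e13} by $|\dot r|$ together with the bounded geometry of the fixed path $L$, so with $r(t)=c_4(t+c_2)^{-c_3}$ one has $|\dot r(t)|\le c_3 c_4 (t+c_2)^{-c_3-1}$, hence $\beta(t)=O((t+c_2)^{-1+bc_3-c_3})$ and $\alpha(t)=O((t+c_2)^{bc_3})$.

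The remaining step is to exhibit $\mu(t)=\lambda(t+c_2)^{c_3}$ (a power, matching the decay rate of $r(t)$) for suitable $\lambda>0$ and to check \eqref{e9}–\eqref{e10}. With this $\mu$, $\dot\mu\mu^{-1}=c_3(t+c_2)^{-1}$, so the bracket $\gamma-\dot\mu\mu^{-1}=1-c_3(t+c_2)^{-1}\ge 1/2$ for $t$ large; $\mu^{-1}$ decays like $(t+c_2)^{-c_3}$, $\alpha\mu^{-p}$ decays like $(t+c_2)^{bc_3-(1+\kappa)c_3}$, and $\beta$ like $(t+c_2)^{(b-1)c_3-1}$. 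Choosing $c_3$ small enough that $b c_3<1$ and comparing powers, both terms on the left of \eqref{e9} are dominated by $\tfrac12\lambda^{-1}(t+c_2)^{-c_3}$ once $\lambda$ is taken sufficiently small; \eqref{e10}, $\mu(0)g(0)<1$, i.e. $\lambda c_2^{c_3}\|u_0-w_{a(0)}\|<1$, is then arranged by a further smallness condition on $\lambda$ (or, equivalently, on the closeness of $u_0$ to the solution set — this is where the hypotheses on $u_0$ and $a(t)$ from \thmref{thm1} enter). Lemma~\ref{lem1} then yields $g(t)=\|z(t)\|\le\mu^{-1}(t)\le\lambda^{-1}c_2^{-c_3}$ for all $t\ge 0$, whence $\|u(t)\|\le\|z(t)\|+\|w_{a(t)}\|\le C$, which is \eqref{e23}.

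I expect the main obstacle to be the bookkeeping in step two: making the passage from the vector ODE to the scalar inequality for $\|z\|$ fully rigorous in a general Banach space (where $\tfrac{d}{dt}\|z\|$ need not exist and one works with Dini derivatives and duality maps $h\in X^*$), and simultaneously verifying that the triple of exponents $(b,\kappa,c_3)$ admits a consistent choice making \eqref{e9} hold for \emph{all} $t\ge 0$ rather than only asymptotically — the small-$t$ regime must be absorbed into the constant $c_2$ and into $\lambda$, and this is the delicate part of tuning $c_2,c_3,c_4$.
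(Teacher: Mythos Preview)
Your overall strategy coincides with the paper's: set $z=u-w_{a(t)}$, reduce \eqref{e23} to a bound on $g(t)=\|z(t)\|$ via \eqref{e5}, derive from \eqref{e7}, \eqref{e17}, \eqref{e36}, \eqref{e2}, \eqref{e3} the scalar inequality $\dot g\le -g + c\,r^{-b}g^{1+\kappa}+c'\,|\dot a|\,r^{-b}$, and then invoke Lemma~\ref{lem1} with a power-law $\mu$. Up to and including your inequality this matches equations \eqref{e28}--\eqref{e39} of the paper (your extra factor $(1+\kappa)^{-1}$ is harmless).

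The genuine gap is in your verification of \eqref{e9}. With $\mu(t)=\lambda(t+c_2)^{c_3}$ having the \emph{same} exponent as $r(t)^{-1}$, the term $\alpha\mu^{-p}$ behaves like $\lambda^{-p}(t+c_2)^{(b-p)c_3}$ while $\mu^{-1}(\gamma-\dot\mu\mu^{-1})$ behaves like $\lambda^{-1}(t+c_2)^{-c_3}$. The exponent comparison $(b-p)c_3\le -c_3$ forces $b\le p-1=\kappa$, which is \emph{not} assumed in \eqref{e3}; for $b>\kappa$ no choice of constants rescues this. Moreover, since $p>1$, taking $\lambda$ small makes $\lambda^{-p}$ blow up faster than $\lambda^{-1}$, so your claim that ``both terms on the left of \eqref{e9} are dominated \ldots\ once $\lambda$ is taken sufficiently small'' goes in the wrong direction for the $\alpha\mu^{-p}$ term (small $\lambda$ helps the $\beta$ term and \eqref{e10}, but hurts this one). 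The paper resolves both issues by taking $\mu(t)=\lambda r^{-k}(t)$ with the specific exponent $k=(b+1)/(p-1)=(b+1)/\kappa$ (see \eqref{e40}, \eqref{e46}), which makes $\alpha\mu^{-p}/\mu^{-1}=c_3 r^{k(p-1)-b}/\lambda^{p-1}=c_3 r(t)/\lambda^{p-1}\to 0$; then $\lambda$ is fixed as $r^k(0)/(2g(0))$ (see \eqref{e47}) to secure \eqref{e10}, and the remaining constraints are met by choosing $r(0)$ and $g(0)$ according to \eqref{e59}, \eqref{e63}. So the ``delicate part'' you anticipated is not the small-$t$ bookkeeping but the choice of the exponent $k$ in $\mu$; with $k=1$ the argument fails for general $b>0$.
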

\begin{proof} Denote 
\be\label{e24} z(t):=u(t)-w(t), \ee 
where $u(t)$ solves
\eqref{e7} and $w(t)$ solves \eqref{e4} with $a=a(t)$. If one proves that
\be\label{e25} \lim_{t\to \infty}\|z(t)\|=0, \ee then \eqref{e23} follows
from \eqref{e25} and \eqref{e5}: \be\label{e26} \sup_{t\geq 0}\|u(t)\|\leq
\sup_{t\geq 0}\|z(t)\|+\sup_{t\geq 0}\|w(t)\|<\infty. \ee \end{proof} To
prove \eqref{e25} we use \lemref{lem1}. 

Let \be\label{e27} g(t):=\|z(t)\|.
\ee 
Rewrite \eqref{e7} as \be\label{e28}
\dot{z}=-\dot{w}-A_{a(t)}^{-1}(u(t))[F(u(t))-F(w(t))+a(t)z(t)]. \ee Note
that: 
\be\label{e29} \sup_{h\in X^*,\|h\|=1}(\dot{w}(t),h)=\|\dot{w}(t)\|.
\ee

\begin{lem}\label{lem4}
If the norm $\|w(t)\|$ in $X$ is differentiable, then
\be\label{e30}
\|w(t)\|^.\leq \|\dot{w}(t)\|.
\ee
\end{lem}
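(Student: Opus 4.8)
The plan is to deduce \eqref{e30} directly from the triangle inequality in $X$ together with the definition of the derivative as a limit of difference quotients, without using any structure of $X$ beyond the norm axioms. Recall first that $w(t)=w_{a(t)}$ is itself differentiable in $t$ (this is what underlies \eqref{e16}--\eqref{e17}), so $\dot w(t)$ exists and the right-hand side of \eqref{e30} is meaningful. Now fix a $t$ at which $s\mapsto\|w(s)\|$ is differentiable and let $s>0$. By the triangle inequality $\|w(t+s)\|\le\|w(t)\|+\|w(t+s)-w(t)\|$, hence
\bee
\frac{\|w(t+s)\|-\|w(t)\|}{s}\le\left\|\frac{w(t+s)-w(t)}{s}\right\|,\qquad s>0 .
\eee

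The second step is simply to pass to the limit $s\to 0^{+}$. Since $w$ is differentiable at $t$, the quotient $s^{-1}\big(w(t+s)-w(t)\big)$ converges in $X$ to $\dot w(t)$, so by continuity of the norm the right-hand side tends to $\|\dot w(t)\|$; by the hypothesis that $\|w(\cdot)\|$ is differentiable at $t$, the left-hand side tends to $\|w(t)\|^{.}$. Passing to the limit gives \eqref{e30}. (The reverse triangle inequality $\big|\,\|w(t+s)\|-\|w(t)\|\,\big|\le\|w(t+s)-w(t)\|$ in fact yields the sharper bound $\big|\|w(t)\|^{.}\big|\le\|\dot w(t)\|$, but only \eqref{e30} is needed in the sequel, in combination with \eqref{e29} and \eqref{e28}.)

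An alternative route, which makes the link with \eqref{e29} explicit, is to choose by the Hahn--Banach theorem a functional $h\in X^{*}$ with $\|h\|=1$ and $(w(t),h)=\|w(t)\|$; then $\|w(t+s)\|\ge(w(t+s),h)$ for every $s$, and estimating the difference quotient of $\|w(\cdot)\|$ as $s\to 0^{-}$ gives $\|w(t)\|^{.}\le(\dot w(t),h)\le\|\dot w(t)\|$. Either way the argument is very short and presents no genuine difficulty; the only point that deserves a word of care is the interchange of the limit with the inequality, i.e.\ the fact that the relevant one-sided limits of the difference quotients equal $\|w(t)\|^{.}$ and $(\dot w(t),h)$ respectively, which is precisely what the differentiability hypotheses on $\|w(\cdot)\|$ and on $w$ guarantee.
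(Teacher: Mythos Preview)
Your proof is correct and follows essentially the same approach as the paper: the paper's argument is precisely your first paragraph, namely the triangle-inequality bound $\frac{\|w(t+s)\|-\|w(t)\|}{s}\leq \frac{\|w(t+s)-w(t)\|}{s}$ for $s>0$, followed by passage to the limit $s\searrow 0$. Your additional remarks on the reverse triangle inequality and the Hahn--Banach alternative are sound but go beyond what the paper does.
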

\begin{proof}
The triangle inequality implies:
\be\label{e31}
\frac{\|w(t+s)\|-\|w(t)\|}{s}\leq \frac{\|w(t+s)-w(t)\|}{s},\quad s>0.
\ee
Passing to the limit $s\searrow 0$, one gets \eqref{e30}.\\
\lemref{lem4} is proved.
\end{proof}

The norm is differentiable if $X$ is strictly convex (see, e.g, \cite{D}).
A Banach space $X$ is called strictly convex if 
$||u+v||<2$  for any $u\neq v\in X$ such that $||u||=||v||=1$. 
A Banach space $X$ is called uniformly convex if for any $\epsilon>0$
there is a $\delta>0$ such that for all $u,v\in B(0,1)$ with
$||u-v||=\epsilon$ one has $||u+v||\leq 2(1-\delta)$. Here $B(0,1)$
is the closed ball in $X$, centered at the origin and of radius one.

Various necessary and sufficient conditions for the Fr\'echet
differentiability of the norm in Banach spaces
are known in the literature (see \cite{DS}), starting with Shmulian's 
paper
of 1940, see \cite{Sh}.

Hilbert spaces, $L^p(D)$ and $\ell^p$-spaces, $p\in (1, \infty)$,   and 
Sobolev spaces $W^{\ell,p}(D)$, $p\in (1, \infty)$, $D\in \R^n$
is a bounded domain, have Fr\'echet 
differentiable norms. These spaces are uniformly convex and they have the 
$E-$property, i.e., if $u_n\rightharpoonup u$ and $||u_n||\to ||u||$
as $n\to \infty$,  then $\lim_{n\to \infty}||u_n-u||=0$.

From \eqref{e17},
\eqref{e29} and \eqref{e3} one gets 
\be\label{e32} \|\dot{w}\|\leq
c_1|\dot{a}(t)|r^{-b}(t)\|w(t)\|,\quad r(t)=|a(t)|, \ee 
where $w(t)=w_a(t)$. Since $\lim_{t\to
\infty}|a(t)|=0$, \eqref{e32} and \eqref{e5} imply \be\label{e33}
\|\dot{w}\|\leq c_2|\dot{a}(t)|r^{-b}(t),\quad c_2=const>0, \ee because
\eqref{e5} implies the following estimate: \be\label{e34} c_1\|w(t)\|\leq
c_2,\quad t\geq 0. \ee 
Inequality \eqref{e13} implies that inequality
\eqref{e33} holds if \be\label{e35} \|\dot{w}\|\leq
c_2|\dot{r}(t)|r^{-b}(t),\quad t\geq 0. \ee Note that \be\label{e36}
F(u)-F(w)=\int_0^1 F'(w+sz)ds z=A(u)z+\int_0^1[A(w+sz)-A(u)]ds z. \ee
Apply $h$ to \eqref{e28}, take $\sup_{h\in X^*,\|h\|=1}$, and use
\lemref{lem4}, relation \eqref{e36}, estimate \eqref{e3}, and
inequality \eqref{e2}, to get:
\be\label{e37} \dot{g}(t)\leq \|\dot{z}(t)\|\leq
c_2|\dot{r}(t)|r^{-b}(t)+c_3r^{-b}(t)g^p-g, \ee where $g(t)$ is defined in
\eqref{e27}, \be\label{e38} p=1+\kappa,\quad c_3:=c_0c_1. \ee Inequality
\eqref{e37} is of the form \eqref{e8} with \be\label{e39}
\gamma(t)=1,\quad \alpha(t)=c_2r^{-b}(t),\quad
\beta(t)=c_2|\dot{r}(t)|r^{-b}(t). \ee 
Choose \be\label{e40}\mu(t)=\lambda
r^{-k}(t),\quad \lambda=const>0,\quad k=const>0 .\ee Then \be\label{e41}
\dot{\mu}\mu^{-1}=-k\dot{r}r^{-1}. \ee 
Let us assume that 
\be\label{e42}
r(t)\searrow 0,\quad \dot{r}<0,\quad |\dot{r}|\searrow 0. \ee Condition
\eqref{e10} implies \be\label{e43} g(0)\frac{\lambda}{r^k(0)}<1, \ee and
inequality \eqref{e9} holds if \be\label{e44}
\frac{c_3r^{-b}(t)r^{kp}}{\lambda^p}+c_2|\dot{r}(t)|r^{-b}(t)\leq
\frac{r^k(t)}{\lambda}(1-k|\dot{r}(t)|r^{-1}(t)),\qquad t\geq 0. \ee 
Inequality
\eqref{e44} can be written as \be\label{e45}
\frac{c_3r^{k(p-1)-b}(t)}{\lambda^{p-1}}+\frac{c_2\lambda|\dot{r}(t)|}
{r^{k+b}(t)}+\frac{k|\dot{r}(t)|}{r(t)}\leq
1. 
\ee

Let us choose $k$ so that
\bee
k(p-1)-b=1, \eee
that is,
\be\label{e46}
k=\frac{b+1}{p-1}.
\ee Choose $\lambda$ as follows:
\be\label{e47}
\lambda=\frac{r^k(0)}{2g(0)}.
\ee 
Then inequality \eqref{e43} holds, and inequality \eqref{e45} can be 
written as:
\be\label{e48}
c_3\frac{r(t)[2g(0)]^{p-1}}{[r^k(0)]^{p-1}}+c_2\frac{r^k(0)}{2g(0)}
\frac{|\dot{r}(t)|}{r^{k+b}(t)}+k\frac{|\dot{r}(t)|}{r(t)}\leq 1,\qquad 
t\geq 0.
\ee
Note that \eqref{e46} implies:
\be\label{e49}
k+b=kp-1.
\ee Choose $r(t)$ so that relations \eqref{e42} hold and 
\be\label{e50}
k\frac{|\dot{r}(t)|}{r(t)}\leq \frac{1}{2},\qquad t\geq 0.
\ee 
Then inequality \eqref{e48} holds if
\be\label{e51}
c_2\frac{[2g(0)]^{p-1}}{r^{b+1}(0)}+c_2\frac{r^k(0)}{2g(0)}
\frac{|\dot{r}(t)|}{r^{kp-1}}\leq \frac{1}{2},\qquad t\geq 0.
\ee
Denote
\be\label{e52}
c_2\frac{r^k(0)}{2g(0)}=c_2\lambda:=c_4.
\ee
Let
\be\label{e53}
c_4\frac{|\dot{r}(t)|}{r^{kp-1}}=\frac{1}{4},\qquad t\geq 0,
\ee 
and $kp>2$. Then equation \eqref{e53} implies
\be\label{e54}
r(t)=\left[t+\frac{4c_4}{kp-2}\frac{1}{r^{kp-2}} \right]^{-\frac{1}{kp-2}}
\left(\frac{kp-2}{4c_4} \right)^{-\frac{1}{kp-2}}. 
\ee 
This $r(t)$ satisfies conditions \eqref{e42}, and equation \eqref{e53} implies:
\be\label{e55}
k\frac{|\dot{r}(t)|}{r(t)}=\frac{kr^{kp-2}(t)}{4c_4},\quad t\geq 0.
\ee 
Recall that $r(t)$ decays monotonically. Therefore, inequality \eqref{e50} 
holds if
\be\label{e56}
\frac{kr^{kp-2}(0)}{4c_4}\leq \frac{1}{2}.
\ee 
Inequality \eqref{e56} holds if
\be\label{e57}
\frac{k}{2}\frac{r^{kp-2-k}(0)}{c_2}2g(0)=\frac{kg(0)}{c_2}r^{k(p-1)-2}(0)
\leq 1.
\ee 
Note that \eqref{e46} implies:
\be\label{e58}
k(p-1)-2=b-1.
\ee
Condition \eqref{e57} holds if $g(0)$ is sufficiently small or $r^{b-1}(0)$ 
is sufficiently large:
\be\label{e59}g(0)\leq \frac{c_2}{k}r^{b-1}(0).\ee 

If $b>1$, then condition \eqref{e59} holds for any fixed $g(0)$ if $r(0)$
is sufficiently large. If $b=1$, then \eqref{e59} holds if $g(0)\leq
\frac{c_2}{k}$. If $b\in (0,1)$ then \eqref{e59} holds either if $g(0)$ is
sufficiently small or $r(0)$ is sufficiently small. 

Consequently, if
\eqref{e54} and \eqref{e59} hold, then \eqref{e53} holds. Therefore,
\eqref{e51} holds if \be\label{e60}
c_3\frac{[2g(0)]^{p-1}}{r^{b+1}(0)}\leq \frac{1}{4}. \ee It follows from
\eqref{e59} that \eqref{e60} holds if \be\label{e61}
c_32^{p-1}\left(\frac{c_2}{k}\right)^{p-1}\frac{1}{r^{p+2b-bp}(0)}\leq
\frac{1}{4}. \ee If $b\in (0,1]$ then \be\label{e62} p-pb+2b>0. \ee Thus,
\eqref{e61} always holds if $r(0)$ is sufficiently large, specifically, if
\be\label{e63} r(0)\geq
[4c_3\left(2c_2 k^{-1}\right)^{p-1}]^{\frac{1}{p(1-b)+2b}}. \ee

We have proved the following theorem.
\begin{thm}\label{thm1} If $r(t)=|a(t)|$ is
defined in \eqref{e54}, and if \eqref{e59} and \eqref{e63} hold, then
\be\label{e64} \|z(t)\|<r^k(t)\lambda^{-1},\quad \lim_{t\to
\infty}\|z(t)\|=0. \ee 
Thus, problem \eqref{e7} has a unique global
solution $u(t)$ and 
\be\label{e65} \lim_{t\to \infty}\|u(t)-y\|=0,
\ee 
where 
\be\label{e66} F(y)=f.
\ee
\end{thm}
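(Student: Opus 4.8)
The plan is to reduce the whole statement to one application of \lemref{lem1} to the scalar function $g(t)=\|z(t)\|$, where $z(t)=u(t)-w_{a(t)}$, and then to read off global existence and the two limits from the resulting a priori bound. The first task is to show that $g$ satisfies a differential inequality of the form \eqref{e8}. I would start from \eqref{e28}, apply an arbitrary functional $h\in X^*$ with $\|h\|=1$ to it, take the supremum over such $h$, and estimate term by term: \lemref{lem4} gives $\dot g(t)\le\|\dot z(t)\|$ (this is the point at which differentiability of $t\mapsto\|z(t)\|$ is used, hence the standing hypothesis that $X$ is strictly convex / has a Fr\'echet differentiable norm, as for the spaces listed before the theorem); the identity \eqref{e36} together with \eqref{e2} and the resolvent bound \eqref{e3} control $A_{a(t)}^{-1}(u)[F(u)-F(w)+a(t)z]$; and differentiating \eqref{e4} gives \eqref{e17}, whence, using \eqref{e3}, \eqref{e5}/\eqref{e34} and \eqref{e13}, one gets the bound \eqref{e35} for $\|\dot w\|$. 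Collecting these yields exactly \eqref{e37}, which is \eqref{e8} with $\gamma\equiv1$, $p=1+\kappa>1$, and $\alpha,\beta$ as in \eqref{e39}.

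The second, and main, task is to produce a majorant $\mu(t)$ satisfying \eqref{e9}--\eqref{e10}. Take $\mu(t)=\lambda r^{-k}(t)$; then $\dot\mu\mu^{-1}=-k\dot r r^{-1}$. Fix $k$ by \eqref{e46}, so that the exponent $k(p-1)-b$ equals $1$ (note that then $kp=(b+1)+k>2$ automatically, since $k=(b+1)/\kappa>1-b$ for $b>0$, which is what is needed to solve the profile equation below). Fix $\lambda$ by \eqref{e47}, which makes \eqref{e10} hold in the form \eqref{e43}. With these choices, and assuming the monotonicity \eqref{e42}, condition \eqref{e9} is rewritten successively as \eqref{e44}, \eqref{e45}, \eqref{e48} and, using \eqref{e49} and \eqref{e58}, reduced to three requirements: \eqref{e50}, \eqref{e59} and \eqref{e60}. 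I then choose $r(t)$ to be the explicit solution \eqref{e54} of the first-order equation \eqref{e53} (with $c_4=c_2\lambda$); one checks directly that this $r$ is positive, strictly decreasing to $0$ with $|\dot r|$ also decreasing, so \eqref{e42} holds; that \eqref{e53} gives \eqref{e55}, so \eqref{e50} follows from the monotonicity of $r$ via \eqref{e56}--\eqref{e57}, hence from \eqref{e59}; that \eqref{e59} is a hypothesis; and that \eqref{e60}, by \eqref{e59}, reduces to \eqref{e61}, which holds for $r(0)$ as large as in \eqref{e63} because the exponent $p(1-b)+2b$ is positive (see \eqref{e62}), the three ranges $b>1$, $b=1$, $b\in(0,1)$ being handled as in the discussion preceding the theorem.

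With \eqref{e9}--\eqref{e10} verified, \lemref{lem1} gives $T=\infty$ and $0\le g(t)\le\mu^{-1}(t)=\lambda^{-1}r^k(t)$ for all $t\ge0$; this is the first inequality in \eqref{e64}, and since $r(t)\to0$ as $t\to\infty$ (from \eqref{e54}), it gives $\|z(t)\|\to0$, the second part of \eqref{e64} and the estimate \eqref{e25}. Then \eqref{e26} gives $\sup_{t\ge0}\|u(t)\|\le\sup_{t\ge0}\|z(t)\|+\sup_{t\ge0}\|w_{a(t)}\|<\infty$, the last supremum being finite by \eqref{e5}; hence by \lemref{lem3} the local solution of \lemref{lem2} extends to a unique global solution of \eqref{e7} on $[0,\infty)$. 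Finally $\|u(t)-y\|\le\|z(t)\|+\|w_{a(t)}-y\|\to0$ as $t\to\infty$ by \eqref{e64} and \eqref{e5} (recall $|a(t)|=r(t)\to0$), which proves \eqref{e65}, while $F(y)=f$ is part of \eqref{e5}.

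I expect the one genuinely delicate point to be the simultaneous bookkeeping in the second paragraph: the parameters are coupled ($\lambda$ depends on $r(0)$ and $g(0)$, $c_4$ on $\lambda$, the profile $r(\cdot)$ on $c_4$), so one must verify that fixing $k$, then $\lambda$, then the profile $r(\cdot)$ leaves exactly one free parameter --- the size of $r(0)$ --- and that the residual inequality \eqref{e60}/\eqref{e61} really can be met by enlarging (or, for $b\in(0,1)$, suitably adjusting) $r(0)$ without destroying \eqref{e59}. Everything else --- local existence, the passage from the pointwise bound to global existence, and the final convergence --- is routine once the differential inequality \eqref{e37} and the differentiability of the norm are in hand.
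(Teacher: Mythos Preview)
Your proposal is correct and follows essentially the same route as the paper: derive the differential inequality \eqref{e37} for $g(t)=\|z(t)\|$ via \lemref{lem4}, \eqref{e36}, \eqref{e2}, \eqref{e3}, and the bound \eqref{e35} on $\|\dot w\|$; then choose $\mu(t)=\lambda r^{-k}(t)$ with $k$, $\lambda$, $r(\cdot)$ fixed by \eqref{e46}, \eqref{e47}, \eqref{e54}, verify \eqref{e9}--\eqref{e10} by reducing them to \eqref{e59} and \eqref{e63}, and apply \lemref{lem1} to conclude. Your added remark that $kp>2$ holds automatically (since $kp=(b+1)+k$ and $k\ge b+1>1-b$) is a useful clarification the paper leaves implicit.
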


\newpage

\end{document}